\def \Hilb {\operatorname{Hilb}}
\newtheorem{theorem}{Theorem}[section]
\newtheorem{lemma}[theorem]{Lemma}
\newtheorem{conjecture}[theorem]{Conjecture}
\theoremstyle{remark}
\newtheorem{remark}[theorem]{Remark}
\theoremstyle{definition}
\newtheorem{defi}[theorem]{Definition}
\begin{document}

\title{Freeness alone is insufficient for Manin-Peyre}

\author{Will Sawin}
\address{Columbia University \\ New York, NY, USA}
\email{sawin@math.columbia.edu}

\maketitle

\begin{abstract} Manin's conjecture predicts the number of rational points of bounded height on a Fano variety. To make this prediction precise, it is necessary to remove a thin subset of rational points. Peyre has tentatively proposed replacing this subset by the set of points where a certain freeness function he defined takes small values. We show that this proposal fails in the case of $\operatorname{Hilb}^2(\mathbb P^n)$, because the usual thin subset, consisting of rational points that lift to a certain double cover, contains many points with relatively large freeness. \end{abstract}

\section{Introduction}

Let $X$ be a geometrically integral smooth projective Fano variety over $\mathbb Q$ of dimension $n$ with Picard rank $r$. Let $\mathcal X$ be a proper integral model of $X$ over $\mathbb Z$. By fixing a volume form on $X(\mathbb R)$, we can define an anticanonical height function $H$ of rational points on $X$. The Manin-Peyre conjecture predicts the distribution of rational points of bounded height on $X$, both in terms of their number and their distribution among the adelic points of $X$.

To get a good statement, it is necessary to remove some thin sets of points on $X$. 

Formally, we say that a map $f: Y \to X$ of geometrically integral smooth projective varieties is a thin map if it is generically finite onto its image and its degree is not $1$.

The modern formulation of the conjecture combines work of Manin \cite{FMT}, Peyre \cite{peyre-duke}\cite{peyre-bordeaux}, Batyrev and Tschinkel \cite{BT}, and Salberger \cite{Salberger}.

\begin{conjecture}[Modern formulation of Manin's conjecture]\label{manin} There exists a finite set of thin maps $f_i: Y_i \to X_i$ such that
\[ \lim_{B \to \infty} \frac{  1 }{ B (\log B)^{r-1}} \sum_{ \substack{ x \in X(\mathbb Q) \\ H(x) < B \\ x \not\in f_i (Y_i(\mathbb Q)) \textrm{ for any }i}} \delta_x =  \alpha(X) \beta(X) \tau^{Br},  \]
where the (weak) limit is taken as measures on $X( \mathbb A_{\mathbb Q})$, $\delta_x$ is the measure of mass $1$ supported at $x$, $\tau^{Br}$ is the restriction to the subset of $X(\mathbb A_{\mathbb Q})$ where the Brauer-Manin obstruction vanishes of the Tamagawa measure \[\tau= \left( \lim_{s\to 1} (s-1)^r L (s, \operatorname{Pic} X_{\overline{\mathbb Q}})\right) \prod_v L_v ( s, \operatorname{Pic} X_{\overline{\mathbb Q}})^{-1} \omega_v \] with $\omega_v$ be the natural measure on $X(\mathbb Q_v)$ defined by the integral model $\mathcal X$ if $v$ is non-Archimedean or the volume form if $v=\infty$, \[\alpha(X) = r \operatorname{vol} \{  y \in  ((\operatorname{Pic}(X) \otimes \mathbb R)^{eff})^{\vee} \mid  K_X \cdot y\leq 1 \}  ,\] and \[\beta(X) = |H^1( \operatorname{Gal} (\overline{\mathbb Q}/\mathbb Q),\operatorname{Pic} X_{\overline{\mathbb Q}})|.\] \end{conjecture}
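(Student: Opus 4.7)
The plan is to follow the universal torsor approach pioneered by Colliot-Thélène--Sansuc and Salberger, which has been the main source of proofs of Manin's conjecture in cases where it is known. First I would fix a universal torsor $\pi: \mathcal{T} \to X$ over a suitable integral model. Up to the action of the Néron--Severi torus, every rational point of $X$ outside a codimension-two bad locus lifts to an integral point on exactly one of finitely many twists of $\mathcal{T}$, where the twists are indexed by a subset of $H^1(\mathbb{Q}, T_{NS})$ satisfying local conditions everywhere. The exceptional thin maps $f_i$ should be arranged to excise precisely those rational points that come from twists with pathological behavior.

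Second, I would transfer the counting problem to the torsor. The anticanonical height pulls back on $\mathcal{T}$ to a monomial in the affine coordinates, so counting rational points of $X$ of height at most $B$ becomes counting lattice points $y \in \mathcal{T}(\mathbb{Z})$ subject to coprimality conditions and $\prod_i |y_i|^{a_i} < B$. The coprimality conditions are handled by Möbius inversion, and the resulting main count is attacked according to the structure of $\mathcal{T}$: by the circle method when the defining equations are few and of low degree, by harmonic analysis on an adelic group when $X$ has homogeneous structure (toric, equivariant compactifications of $\mathbb{G}_a^n$, flag varieties), or by direct geometry-of-numbers arguments for affine-space torsors.

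Third, I would match the main term to the predicted constant on the right-hand side. The factor $\beta(X)$ appears as the cardinality of the set of locally solvable twists contributing to the sum; the Brauer--Manin restriction in $\tau^{Br}$ emerges from orthogonality conditions on these twists via Poitou--Tate duality; the local densities $L_v^{-1}\omega_v$ appear as the singular series of the circle-method computation or as local orbital integrals in the harmonic-analytic setup; and $\alpha(X)$ arises at the infinite place from rewriting the height condition $\prod_i |y_i|^{a_i} < B$ as a volume over a slice of the effective cone. Verifying the weak convergence on $X(\mathbb{A}_{\mathbb{Q}})$, rather than merely the total mass, requires running the same count with the lattice restricted by an arbitrary open adelic condition and checking that each local factor deforms accordingly.

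The decisive obstacle — and precisely the subject of this paper — is specifying the exceptional thin maps $f_i$ correctly. If the thin set is too small, the limit diverges or exceeds the prediction; if it is too large, legitimate rational points are discarded and the constant falls short. The paper shows that Peyre's local freeness heuristic cannot correctly cut out the exceptional set for $\Hilb^2(\mathbb{P}^n)$, so the family $\{f_i\}$ cannot be recovered from the purely local data that freeness measures; a correct intrinsic description of the thin set in general thus remains the central open problem in carrying out this plan.
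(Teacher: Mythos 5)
The statement you were asked about is Conjecture~\ref{manin}, the modern formulation of the Manin--Peyre conjecture, and the paper does not prove it; the paper \emph{assumes} it as context and then shows that one specific proposed refinement of it (replacing the excision of thin sets with Peyre's freeness condition) must fail for $\Hilb^2(\mathbb P^n)$. There is therefore no ``paper's own proof'' to compare against. Your write-up is not a proof of the conjecture either, and you say as much in your final paragraph: you present a plausible strategy (universal torsors following Colliot-Th\'el\`ene--Sansuc and Salberger, M\"obius inversion of coprimality conditions, circle method or harmonic analysis or geometry of numbers to count lattice points, and matching the local densities, $\alpha$, $\beta$, and the Brauer--Manin restriction to the predicted constant), but you explicitly concede that identifying the correct thin maps $f_i$ remains open and that the plan cannot currently be carried through in general. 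A roadmap that ends by acknowledging the central step is unresolved is not a proof; it is a survey of partial progress.

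To the extent your sketch is useful, it is accurate on the broad strokes: the universal torsor method has indeed produced most known cases, $\beta(X)$ does arise from counting locally soluble twists, and $\alpha(X)$ does arise as a cone volume at the archimedean place. But several steps are stated at a level of generality where they are not actually known to work: the equivalence between the height count on $X$ and a lattice-point count on a twist of $\mathcal T$ requires the torsor to exist and to have a tractable presentation, which is far from automatic; the circle method and harmonic-analytic techniques only apply to narrow families; and the claim that the Brauer--Manin restriction ``emerges from orthogonality conditions via Poitou--Tate duality'' is a hope, verified only in special cases. Since the statement is labeled a conjecture, the correct response is simply that no proof is expected, and the paper's actual contribution is to constrain what form a correct formulation of the exceptional set can take, by exhibiting points of relatively large freeness inside the thin set $f(Bl_\Delta(\mathbb P^n\times\mathbb P^n)(\mathbb Q))$ in density $\gg B\log B$.
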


The minimal finite set of thin maps for which Conjecture \ref{manin} should be valid was given a purely geometric description in \cite[\S5]{LST}.

Peyre \cite{peyre-freedom}\cite{peyre-allheights} has proposed two notions, ``freeness" and the ``all the heights" approach, to replace the thin maps in Conjecture \ref{manin}. In this article, we will show that freeness cannot do the job alone. First we review the definition of freeness. 

Fix a rank $n$ vector bundle $\mathcal T$ on $\mathcal X$ that agrees with the tangent bundle away from finitely many primes, and fix a Riemannian metric on $T X_{\mathbb R}$. The determinant of $\mathcal T$ is an Arakelov line bundle structure on the anticanonical bundle of $X$, and therefore defines an anticanonical height function $H(X)$ on $K$. For convenience we will use this anticanonical height function. 

For $x$ a rational point of $X$, we can extend $x$ to a section of $\mathcal X$ defined over $\mathbb Z$.  This gives $ T_x { X}$ the structure of a rank $n$ vector bundle on $\mathbb Z$, with a metric. Such vector bundles are isomorphic to $\mathbb Z^n$, and so $T_x X$ may be viewed as a rank $n$ lattice.

Following Bost, Peyre has defined for a lattice $\Lambda$  of rank $n$ slopes $\mu_1(\Lambda),\dots, \mu_n(\Lambda)$ satisfying $\mu_1(\Lambda) \geq \dots \geq \mu_n(\Lambda)$,  $\sum_{i=1}^n \mu_i (\Lambda) = -\log \operatorname{vol} \mathbb R^n/\Lambda$, and $\mu_i (\Lambda)=- \log \gamma_i(\Lambda) + O_n(1)$ where $\gamma_i$ is the logarithm of the $i$th successive minimum of the lattice $\Lambda$ \cite[Definition 4.4]{peyre-freedom}. For us only the approximate value of freeness is relevant, but for clarity, to define freeness, first define \[ m_{\Lambda}' (k) = \sup \{ - \log \operatorname{vol} { \Lambda'} \mid \Lambda' \subseteq \Lambda, \operatorname{rank}(\Lambda') = k \] for $k \in \{0, \dots n\}$ and then define the convex hull as \[ m_\Lambda(i) = \sup \left\{   \frac{ (k_2- i) m_{\Lambda}' (k_1) + (i-k_1) m_{\Lambda}'(k_2) }{ k_2 - k_1} \mid k_1 \leq i \leq k_2 \right\}\] and \[ \mu_{i} (\Lambda)= m_{\Lambda}(i) - m_{\lambda} ( i-1 ) .\] Because $m_\Lambda$ is a piecewise linear function, linear on the interval $[i,i-1]$, $\mu_i$  is its slope on the interval, justifying the name.

We define the freeness of $x$ to be \cite[Definition 4.5]{peyre-freedom} \[ l(x) = \frac{ \max (   \mu_n( T_x { X}), 0 ) }{(  \log H(x))/n } .\]  Because $H(x) = - \log \operatorname{vol} ( T_x X)$, we have $\log H(x) = \sum_{i=1}^n \mu_i ( x^*  T_X) $ \cite[Remark 4.5(a)]{peyre-freedom}, and thus $ \mu_n ( x^*  T_{ X} ) \leq \log H(x)/n$ so $l(x) \leq 1$.

We fix a function $\epsilon(t) $ which goes to $0$ as $t$ goes to $\infty$, but does so slower than any power of $\log \log t$. 

Peyre suggested \cite[Empirical Formula 6.13 and Empirical Distribution 6.18]{peyre-freedom} that we may be able to replace the condition ``$x \not\in f_i (Y_i(\mathbb Q) )\textrm{ for any }i$" in Manin's conjecture with ``$l(x) > \epsilon(H(x))$". In this note, we show that this is not true.

Specifically, let $X = \Hilb^2 (\mathbb P^n)$ be the Hilbert scheme of pairs of points over projective space, defined over $\mathbb Q$, and let $\mathcal X$ be the corresponding scheme over $\mathbb Z$. The space $X$ is smooth, projective, geometrically integral, and, if $n>2$, Fano (Lemma \ref{fano}). All our other arguments, however, will require only that $n \geq 2$. 

 As long as $n\geq 2$, the Picard rank $r$ is $2$.
 
 We have a  double covering $f: Bl_{\Delta} (\mathbb P^n \times \mathbb P^n) \to \Hilb^2 (\mathbb P^n)$, where $Bl_{\Delta} (\mathbb P^n \times \mathbb P^n) $ is the blowup of the diagonal $\Delta$ of $\mathbb P^n \times \mathbb P^n$. To construct this covering, note that there is a map $(\mathbb P^n \times \mathbb P^n - \Delta) \to \Hilb^2 (\mathbb P^n)$ that sends a distinct pair of points to the ideal vanishing at those two points. To extend this map to the whole space, it is necessary to blow up the diagonal only once. (After blowing up once, the indeterminacy locus has codimension at least $2$ and so is a proper subset of the exceptional divisor, but the indeterminacy locus is invariant under $PGL_{n+1}$, and $PGL_{n+1}$ acts transitively on the exceptional divisor, so the indeterminacy locus is empty.)

 \begin{theorem}\label{main} For any $\epsilon< n/(n+1)$, there is a set $S_\epsilon$ of rational points on $\Hilb^2 (\mathbb P^n)$ such that 
 
 \begin{enumerate}
 
 \item All points in $S_\epsilon$ are the image under $f$ of points of $Bl_{\Delta} (\mathbb P^n \times \mathbb P^n) (\mathbb Q)$ .
 
\item All points in $x \in S$ have $l(x)> \epsilon$.
 
 \item The number of points in $S_\epsilon$ of height less than $B$ is at least a constant times $B \log B$.
 
 \end{enumerate}
 
 \end{theorem}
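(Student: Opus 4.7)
The plan is to take $S_\epsilon$ to be the image under $f$ of a family of pairs $(P,Q)$ of distinct rational points of $\mathbb P^n(\mathbb Q)$ satisfying (a) disjoint reductions modulo every prime, (b) bounded archimedean separation, and (c) $\log h(P), \log h(Q)$ within a prescribed ratio. Any such pair gives a rational point of $\mathbb P^n\times\mathbb P^n \setminus \Delta \subset Bl_\Delta(\mathbb P^n\times\mathbb P^n)$ mapping under $f$ to $\{P,Q\}$, so condition (1) is automatic.

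The key input is a universal lower bound on the smallest slope of $T_P\mathbb P^n$. Writing $P = [a_0:\cdots:a_n]$ with $a \in \mathbb Z^{n+1}$ primitive, the orthogonal projections $\pi(e_0),\ldots,\pi(e_n)$ of the standard basis to $a^\perp$ each have Euclidean norm at most $1$, generate $T_P\mathbb P^n = \pi(\mathbb Z^{n+1})$, and (being $n+1$ vectors in the $n$-dimensional space $a^\perp$) include $n$ linearly independent ones. After rescaling by $\|a\|^{-1}$ to pass to the Arakelov metric, this forces $\lambda_n(T_P\mathbb P^n) \leq \|a\|^{-1}$, and therefore $\mu_n(T_P\mathbb P^n) \geq \log h(P) - O(1)$ uniformly in $P$. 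Under (a), the cover $f$ is \'etale at the integral section $(P,Q)\in Bl_\Delta(\mathbb Z)$, so $T_{\{P,Q\}}\Hilb^2 \cong T_P\mathbb P^n \oplus T_Q\mathbb P^n$ as Arakelov lattices; (a) and (b) together force the exceptional-divisor height $H_E(P,Q)$ to be $O(1)$, so from $f^*(-K_{\Hilb^2}) = (n+1)(H_1+H_2) - (n-2)E$ we deduce $\log H(\{P,Q\}) = (n+1)(\log h(P)+\log h(Q)) + O(1)$. Combining,
\[
l(\{P,Q\}) \;\geq\; \frac{2n\min(\log h(P), \log h(Q))}{(n+1)(\log h(P)+\log h(Q))} - o(1),
\]
which exceeds $\epsilon$ whenever $\log h_{\min}/\log h_{\max} > (n+1)\epsilon/(2n-(n+1)\epsilon)$. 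The restriction $\epsilon < n/(n+1)$ is precisely the condition that this threshold is strictly less than $1$, leaving a nontrivial open range of admissible ratios and yielding (2).

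For (3), a dyadic decomposition combined with Schanuel's estimate $\#\{P\in\mathbb P^n(\mathbb Q):h(P)\leq T\} \asymp T^{n+1}$ yields $\gg X^{n+1}\log X = B\log B$ pairs satisfying $h(P) h(Q) \leq X := B^{1/(n+1)}$ (equivalently $H(\{P,Q\}) \leq B$) together with the height-ratio constraint; the $\log X$ factor arises from the $\int dt/t$ over the admissible ratios of heights. The non-collision condition (a) is handled by a standard sieve over primes and loses only a positive-density factor $\prod_p(1-p^{-n})>0$, while (b) is a positive-measure open condition. The main obstacle is ensuring that the sieve for (a) and the constraints (b), (c) can be imposed simultaneously without destroying the $\log B$ factor in the count, and that the $O(1)$ constants appearing in the slope and height estimates are genuinely uniform over the pairs being counted; the slope bound itself is elementary, and the height formula is a straightforward Arakelov computation, so the technical weight lies in the sieve-theoretic bookkeeping.
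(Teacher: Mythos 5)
Your proposal is correct and follows essentially the same strategy as the paper: define the candidate set by the same three conditions on pairs $(P,Q)$ (disjoint reduction at every prime, bounded Archimedean separation, comparable heights), compare slopes of tangent lattices across the covering maps $b$ and $f$, and count by a dyadic decomposition plus a sieve over the congruence conditions. The one genuine point of departure is that you re-derive the key freeness input for $\mathbb P^n$ from scratch: you bound $\mu_n(T_P\mathbb P^n)\geq\log h(P)-O(1)$ by estimating the successive minima of $\pi(\mathbb Z^{n+1})\subset a^\perp$ directly, and you pass to products by the fact that slopes of a direct sum are the merged slopes of the summands; the paper instead cites Peyre's Corollary~7.4 and Proposition~7.13 for precisely these two facts and then propagates them through its Lemma~\ref{slope-comparison}. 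Your version is thus a bit more self-contained on the slope side, at the cost of having to verify that the Fubini--Study scaling of the lattice $\pi(\mathbb Z^{n+1})$ is the asserted $\|a\|^{-1}$. One small caution: the claimed isomorphism $T_{\{P,Q\}}\Hilb^2\cong T_P\mathbb P^n\oplus T_Q\mathbb P^n$ is exact only at finite places under condition~(a); at the Archimedean place the chosen Riemannian metrics on $\mathbb P^n\times\mathbb P^n$, $Bl_\Delta$, and $\Hilb^2$ need not agree, and more importantly the distortion through $f$ degenerates as $(P,Q)$ approaches the exceptional divisor --- this is exactly what condition~(b) is for, and corresponds to the vanishing of the section $s$ of $\mathcal O(E)$ in the paper's Lemma~\ref{lattice-comparison}(2). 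You implicitly acknowledge this by invoking~(b), but it would be cleaner to say ``comparable up to $O(1)$ distortion, with the constant controlled by the lower bound on $d(P,Q)$'' rather than ``isomorphic as Arakelov lattices.''
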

 
 It follows immediately that the freeness variant of Conjecture \ref{manin} is not satisfied, because Theorem \ref{main} implies that the density of the thin set $f(Bl_{\Delta} (\mathbb P^n \times \mathbb P^n) (\mathbb Q))$ is positive, which contradicts equidistribution by \cite[Theorem 1.2]{Browning-Loughran}.
   
 However, it is easy to check that Peyre's ``all the heights" proposed modification to Manin's conjecture does remove this bad set, and hence it is possible that the combination of these two modifications could replace the breaking thin maps. We summarize this idea briefly: On a variety of Picard rank $r$, we take $r$ line bundles $L_1,\dots, L_r$ which generate $\operatorname{Pic}(X)$, at least over $\mathbb Q$, and put an Arakelov structure on each, giving $r$ height functions $H_1,\dots, H_r$. Fix a compact subset $D$ of $(\mathbb R^{>0})^r$. Fix $u \in \mathbb R^r$, which when viewed using the basis $L_1,\dots, L_r$ as a linear form on $\operatorname{Pic}(X) \otimes \mathbb R$ lies in the interior of the dual of the effective cone. Rather than counting points of bounded height, we count points such that $(H_1(X)/ B^{u_1}, \dots, H_r(X)/ B^{u_r}) \in D$, in the limit as $B$ goes to $\infty$. Peyre asks \cite[Question 4.8 and 4.10]{peyre-allheights} whether the analogue of Conjecture \ref{manin} holds for this point count.
 
 For $X= \Hilb^2 (\mathbb P^2)$, we could take $L_1= K_X^{-1}$ and $L_2$ equal to $\mathcal O(E)$. Because $L_2$ is effective, we must have $u_2>0$. Thus as $B$ goes to $\infty$, we must have $H_2(X) \to \infty$. This is significant to us as our set $S_\epsilon$ will be defined such that a certain function $c$, which will be an Arakelov height function for $\mathcal O(E)$, is bounded. Because $H_2(X)$ is bounded on $S_\epsilon$, $S_\epsilon$ will not affect the equidistribution in the ``all the heights" model. 
 
It is possible that Peyre's notion of ``freeness" can replace the thin maps $f_i$ of degree $0$ in Conjecture \ref{manin}, while the ``all the heights" approach can replace the thin maps of degree $\geq 2$.

For $\Hilb^2 (\mathbb P^2)$, Conjecture \ref{manin} was proved (without the freeness modification) by Le Rudelier \cite{Rudelier}, and a function field analogue was proved by M\^{a}nz\u{a}\c{t}eanu \cite{Manzateanu}.

\begin{remark} It may be possible to prove Conjecture \ref{manin}, or its ``all the heights" modification, for $\Hilb^2(\mathbb P^n)$ for any $n$ by viewing it as a $\mathbb P^2$-bundle over the Grassmanian $G(2,n+1)$ parameterizing lines in $\mathbb P^n$, using known point-counting results on the Grassmanian, and using lattice-point counting results to count points on the fibers. \end{remark}
 
This research was conducted during the period the author served as a Clay Research Fellow. I would like to thank Emmanuel Peyre, Tim Browning, and Johan de Jong for helpful conversations.
 
 \section{Proofs}
 
 Consider the map $b: Bl_\Delta (\mathbb P^n \times \mathbb P^n) \to \mathbb P^n \times \mathbb P^n$. We give the spaces $Bl_\Delta (\mathbb P^n \times \mathbb P^n) , \mathbb P^n \times \mathbb P^n,$ and $\Hilb^2 ( \mathbb P^n)$, as well as the maps $b$ and $f$, their integral structures arising from the standard integral structure $\mathbb P^n_{\mathbb Z}$ on $\mathbb P^n$.

 Fix Riemannian metrics on the real points of $\mathbb P^n, \mathbb P^n \times \mathbb P^n$, $Bl_\Delta (\mathbb P^n \times \mathbb P^n)$, and $\Hilb^2 ( \mathbb P^n)$.
 
 Fix a constant $\delta$ with $0< \delta<1/2$ and a constant \[ C > \frac{1}{ \max _{ x_1, x_2 \in \mathbb P^n(\mathbb R) } d(x_1,x_2)} \] where the distance $d(x_1,x_2)$ is calculated using the fixed Riemannian metric. 

\begin{defi} For $x_1, x_2$ two distinct points in $\mathbb P^n(\mathbb Q)$, let \[ c(x_1,x_2) = \frac{ \max  \{ W \in \mathbb N \mid x_1 \equiv x_2 \mod W \}}{ d(x_1,x_2) } \] where we say $x_1 \equiv x_2 \mod W$ if $x_1$ and $x_2$ are equal when restricted to $\mathbb P^n(\mathbb Z/W)$. \end{defi}

\begin{defi} Let $S_{C,\delta}$ be the set of points in $\Hilb^2 (\mathbb P^n) (\mathbb Q) $ consisting of, for each $(x_1,x_2) \in \mathbb P^n(\mathbb Q)$ with $x_1\neq x_2$, $c(x_1,x_2) < C$, and $\log H(x_1), \log H(x_2) > \delta (\log H(x_1) + \log H(x_2)) $, the point $f ( b^{-1}(x_1,x_2)) $ (i.e. the ideal of functions vanishing at $x_1$ and $x_2$.) \end{defi}

\begin{lemma}\label{lattice-comparison}

 Let $Y$ and $X$ be schemes over $\mathbb Z$ that are smooth of dimension $n$, with proper generic fibers. Let $f: Y \to X$ be a map that is generically \'{e}tale. Fix Riemannian metrics on $Y$ and $X$. 

Let $s \in K_Y \otimes f^* K_X^{-1}$ be the section defined by the natural map $f^* K_X \to K_Y$, and fix an absolute value on $K_Y \otimes f^* K_X^{-1}$ over $\mathbb R$.

Let $y$ be a point in the \'{e}tale locus of $f$. Then

\begin{enumerate}

\item The natural map $df:   T_y Y \to  T_{(f(y)} X$ of integer lattices has cokernel of order $\prod_{p} |s(y)|_p^{-1}$.

\item For any element $u \in T_y Y$, we have \[ \lVert u\rVert |s(y)|_{\infty}\ll \lVert df(u) \rVert \ll  \lVert u \rVert  .\] 

\end{enumerate}

\end{lemma}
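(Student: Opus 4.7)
The plan is to recognize the section $s$ as the Jacobian determinant of $f$, deduce (1) from the product formula on $\mathbb Q$, and obtain (2) from standard singular-value bounds combined with compactness of $Y(\mathbb R)$.

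First I would fix integral bases. Because $Y \to \operatorname{Spec} \mathbb Z$ has proper generic fiber and $y \in Y(\mathbb Q)$, the valuative criterion extends $y$ and $f(y)$ uniquely to $\mathbb Z$-sections $\tilde y$ and $f \circ \tilde y$. The pullbacks $\tilde y^* T_Y$ and $(f\tilde y)^* T_X$ are free $\mathbb Z$-modules of rank $n$; pick integral bases for each. In these bases $df$ is represented by an $n \times n$ integer matrix $A$, and since $y$ lies in the \'etale locus the map over $\mathbb Q$ is an isomorphism, so $A$ is injective with finite cokernel of order $|\det A|$.

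For part (1), the next step is to identify $\tilde y^* s$ with $\pm \det A$. The natural map $f^* K_X \to K_Y$ is the top exterior power of $f^* \Omega_X \to \Omega_Y$, which is dual to $df: T_Y \to f^* T_X$; so under the canonical isomorphism
\[
K_Y \otimes f^* K_X^{-1} \;\cong\; \operatorname{Hom}(\det T_Y,\, \det f^* T_X),
\]
the section $s$ corresponds to $\det(df)$. The integral bases above trivialize $\det \tilde y^* T_Y$ and $\det(f\tilde y)^* T_X$, and with respect to these trivializations $\det(df)$ becomes the integer $\det A$. Hence $|s(y)|_p = |\det A|_p$ at every finite prime, and the product formula on $\mathbb Q^*$ gives
\[
\prod_p |s(y)|_p^{-1} \;=\; \prod_p |\det A|_p^{-1} \;=\; |\det A|,
\]
which is the cokernel order, proving (1).

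For part (2), I would equip $K_Y \otimes f^* K_X^{-1}$ with the archimedean norm induced by the Riemannian metrics; any other choice of absolute value differs from this by a bounded factor and only affects the implied constants. With this choice, $|s(y)|_\infty$ is the product $\sigma_1 \cdots \sigma_n$ of the singular values of $df: T_y Y \otimes \mathbb R \to T_{f(y)} X \otimes \mathbb R$ as a map of Euclidean spaces. The upper bound $\|df(u)\| \ll \|u\|$ is immediate because $Y(\mathbb R)$ is compact (the generic fiber is proper) and the operator norm of $df$ is a continuous function on this compact set. Calling this uniform bound $M$, the lower bound follows from $\sigma_1 \leq M$ and $\prod_i \sigma_i = |s(y)|_\infty$, which together give $\sigma_n \geq |s(y)|_\infty / M^{n-1} \gg |s(y)|_\infty$, hence $\|df(u)\| \geq \sigma_n \|u\| \gg |s(y)|_\infty \|u\|$.

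The main technical point is the bookkeeping in part (1): the algebraically defined section $s$ must be matched with the integer $\det A$ under compatible integral trivializations of $\det T_Y$ and $\det f^* T_X$, which requires unwinding the various identifications of line bundles. Once this is done, (1) follows from the product formula and (2) is a short exercise in linear algebra and compactness.
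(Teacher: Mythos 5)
Your proof is correct and follows essentially the same path as the paper's: identify $s$ with $\det(df)$ to get part (1) from the cokernel-equals-determinant fact for integer matrices, and use boundedness of singular values on a compact manifold plus the product-of-singular-values-equals-determinant identity for part (2). Your write-up is a bit more explicit (naming the singular values $\sigma_i$, spelling out that the choice of archimedean absolute value only shifts implied constants), but there is no difference of substance.
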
 

Recall that the vanishing divisor of $s$ is called the ramification divisor of $f$.

\begin{proof} The determinant of $df$ is the map \[ \det(df)  y^*  K_Y^{-1} \to  (f(y))^* K_X^{-1}.\]  Thus $\det(df)$ may be viewed as a section of $y^* ( K_Y \otimes f^* K_X^{-1})$. By definition, this is $s(y)$. Hence $|s(y)|_p$ is the $p$-adic absolute value of the determinant of $df$. Because $df$ is an injective map of integral lattices, its determinant is the order of its cokernel.

For part (2), we first prove the inequality $ \lVert df(u) \rVert \ll  \lVert u\rVert $. This follows from the fact that $f$ is a differentiable morphism of compact manifolds, so the norm of its first derivative is continuous on a compact space, hence bounded. This implies that the singular values of $df$, viewed as a map of real vector spaces, are bounded. Because the product of the singular values is the absolute value of the determinant, this implies that the least singular value is bounded by a multiple of the determinant, which is $|s(y)|_{\infty}$. This gives the other inequality.

\end{proof}

Let us now apply this lemma to the maps $f$ and $b$. 

\begin{lemma}\label{ramification-divisor} The ramification divisor of $f$ is the exceptional divisor $E$ of $b$, and the ramification divisor of $b$ is $(n-1) E$. \end{lemma}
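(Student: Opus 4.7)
The plan is to verify both ramification divisors by reducing to local model computations in analytic coordinates around a point of the diagonal, since away from $\Delta$ the map $b$ is an isomorphism and the map $f$ is a trivial étale double cover, so neither ramification divisor can have support outside $E$.

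For the blowup $b$, I would simply invoke the classical formula for the canonical bundle of a blowup of a smooth subvariety. The diagonal $\Delta \cong \mathbb{P}^n$ sits in $\mathbb{P}^n \times \mathbb{P}^n$ as a smooth subvariety of codimension $n$, so the standard formula gives $K_{Bl_\Delta(\mathbb{P}^n \times \mathbb{P}^n)} = b^* K_{\mathbb{P}^n \times \mathbb{P}^n} + (n-1)E$. Unwinding the definition of the section $s$ in Lemma \ref{lattice-comparison}, the ramification divisor of $b$ is precisely the zero locus of this section, namely $(n-1)E$. Alternatively, a direct coordinate check: on the chart of the blowup where $(w_1,\dots,w_n,t_1,s_2,\dots,s_n)$ map to $(w_1,\dots,w_n,t_1,t_1s_2,\dots,t_1s_n)$ (using sum/difference coordinates $w=(u+v)/2$, $t=u-v$ on $\mathbb{P}^n \times \mathbb{P}^n$), the Jacobian determinant equals $t_1^{n-1}$, vanishing exactly to order $n-1$ along $E = \{t_1 = 0\}$.

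For the double cover $f$, I would identify the involution that realizes $f$ as the quotient by $\mathbb{Z}/2$, namely the lift $\sigma$ to the blowup of the swap $(x_1,x_2) \leftrightarrow (x_2,x_1)$, and show that its fixed locus is exactly $E$. Off $E$ the swap acts without fixed points because its fixed points on $\mathbb{P}^n \times \mathbb{P}^n$ formed the diagonal, which has been blown up. On $E$, above a point $x \in \Delta$, the fiber of $b$ is $\mathbb{P}(N_{\Delta/\mathbb{P}^n \times \mathbb{P}^n,x}) = \mathbb{P}(T_x\mathbb{P}^n)$, and $\sigma$ acts on the normal direction $(v,-v)$ by negation, which becomes trivial after projectivizing. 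Thus $E$ is the entire ramification locus set-theoretically. To pin down the multiplicity, I would use the same local coordinates: $\sigma$ fixes $w$ and the $s_i$ and sends $t_1 \mapsto -t_1$, so on the smooth quotient the map $f$ looks like $t_1 \mapsto t_1^2$ in the $t_1$-direction and the identity otherwise, giving Jacobian determinant $2t_1$ and hence ramification divisor equal to $E$ with multiplicity $1$.

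The only real subtlety is to confirm that the local picture around $E$ is genuinely the quotient of $Bl_\Delta(\mathbb{P}^n \times \mathbb{P}^n)$ by $\sigma$ (so that $\text{Hilb}^2(\mathbb{P}^n)$ is smooth there and $f$ is étale off $E$); this is standard, and follows because the $\sigma$-action on the analytic coordinates above is a pseudoreflection, whose quotient is smooth by Chevalley–Shephard–Todd. With the local model in hand both computations of $R_f$ and $R_b$ are immediate from taking Jacobian determinants.
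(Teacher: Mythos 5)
Your proof is correct, and the part concerning $b$ (the blowup canonical-bundle formula, or equivalently the Jacobian $t_1^{n-1}$ in sum/difference coordinates) is essentially identical to the paper's. For $f$, however, you take a genuinely different route from the paper. The paper argues in three short steps: (i) since $f$ is a degree-2 cover of smooth varieties, its ramification divisor has multiplicity at most $1$; (ii) since $f$ is the quotient by the lifted swap, whose fixed locus lies in $E$, the ramification divisor is supported on $E$; (iii) since $\Hilb^2(\mathbb P^n)$ is rationally connected, hence simply connected, $f$ cannot be étale, so the ramification divisor must be nonempty and hence equal to $E$. You instead show directly that the swap fixes $E$ pointwise --- because it acts by $-1$ on the normal bundle of $\Delta$, which becomes trivial after projectivizing --- and then verify multiplicity $1$ by the local Jacobian $2t_1$ of $t_1 \mapsto t_1^2$. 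Your argument is more elementary and explicit (no appeal to rational connectedness or simple connectedness; the Chevalley--Shephard--Todd step is only needed to confirm the local model, which the paper implicitly grants by assuming smoothness of $\Hilb^2$), while the paper's is shorter and avoids coordinate work. Both are valid; the two approaches differ precisely in how they rule out the possibility that $f$ is unramified.
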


\begin{proof} The map $f$ is a double covering of smooth varieties, so its ramification divisor cannot have multiplicity. It can be viewed as the quotient of an involution swapping the two copies of $\mathbb P^n$, which fixes only points on $E$, so it is ramified only at $E$. It must ramify at $E$ because $\operatorname{Hilb}^2 (\mathbb P^n)$ is rationally connected. Thus the ramification divisor of $f$ is $E$. 

The map $b$ is a blowup of a smooth variety at a smooth subvariety of codimension $n$, and hence the ramification divisor is $(n-1)$ times the exceptional divisor.

\end{proof}

\begin{lemma}\label{slope-comparison} Let $y$ be a point of $Bl_\Delta (\mathbb P^n \times \mathbb P^n)( \mathbb Q)$ whose image under $b$ is a pair $(x_1,x_2)$ of distinct points of $\mathbb P^n(\mathbb Q)$. 

Then \[\left| \mu_i ( ( T_{b(y)} { \mathbb P^n \times \mathbb P^n} ) - \mu_i ( ( T_{(f(y)} { \Hilb^2 ( \mathbb P^n )} ) \right| \leq  n  \log ( c(x_1,x_2) ) + O(1)  .\]

\end{lemma}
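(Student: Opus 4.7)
The plan is to apply Lemma \ref{lattice-comparison} separately to the maps $b$ and $f$ at the point $y$, producing two descriptions of lattices relative to $T_y Y$, and then to combine them by chaining through $T_y Y$ as a common intermediate.

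First, I would pin down the $v$-adic sizes of the sections $s$ provided by Lemma \ref{lattice-comparison} for both $b$ and $f$ (call them $s_b$ and $s_f$). By Lemma \ref{ramification-divisor}, if $t$ is a local equation for $E$, then $s_b = t^{n-1}$ and $s_f = t$ up to local units. A standard computation in a chart of the blowup shows that $|t(y)|_\infty$ is comparable, up to absolute constants, to the archimedean distance from $(x_1, x_2)$ to the diagonal of $\mathbb P^n \times \mathbb P^n$, hence to $d(x_1, x_2)$. At a prime $p$, choosing an affine chart on $\mathbb P^n \times \mathbb P^n$ adapted to the reduction $(x_1, x_2) \bmod p$ identifies $|t(y)|_p$ with the $p$-adic sup-norm of the displacement $x_2 - x_1$ in that chart, which equals $1/W_p$, where $W_p$ is the $p$-part of $W = \max\{W \in \mathbb N : x_1 \equiv x_2 \pmod W\}$. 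Feeding these into Lemma \ref{lattice-comparison} yields injections of integer lattices $db: T_y Y \hookrightarrow T_{b(y)}(\mathbb P^n \times \mathbb P^n)$ of index $W^{n-1}$ and $df: T_y Y \hookrightarrow T_{f(y)} \Hilb^2(\mathbb P^n)$ of index $W$, whose least archimedean singular values are $\gg d(x_1,x_2)^{n-1}$ and $\gg d(x_1, x_2)$ respectively, and whose operator norms are $O(1)$.

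Second, I would convert these into slope bounds using $\mu_i = -\log \gamma_i + O(1)$ for the $i$th successive minimum. Two elementary observations suffice: (a) if $\Lambda' \subseteq \Lambda$ is a sublattice of index $N$ in the same normed space, the containments $N\Lambda \subseteq \Lambda' \subseteq \Lambda$ force $|\mu_i(\Lambda') - \mu_i(\Lambda)| \leq \log N + O(1)$; and (b) if a lattice is equipped with two norms related by $c_1 \|\cdot\| \leq \|\cdot\|' \leq c_2 \|\cdot\|$, then its successive minima scale by factors in $[c_1, c_2]$, so its slopes shift by an amount in $[-\log c_2, -\log c_1] + O(1)$. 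Applying (a) and (b) on the $b$ side yields $\mu_i(T_{b(y)}(\mathbb P^n \times \mathbb P^n)) - \mu_i(T_y Y) \in [-O(1),\, (n-1)\log(W/d(x_1, x_2)) + O(1)]$, and the analogue with $n-1$ replaced by $1$ on the $f$ side. Subtracting and recalling $c(x_1,x_2) = W/d(x_1,x_2)$ yields $|\mu_i(T_{b(y)}(\mathbb P^n \times \mathbb P^n)) - \mu_i(T_{f(y)}\Hilb^2(\mathbb P^n))| \leq (n-1)\log c(x_1,x_2) + O(1)$, which is slightly stronger than the statement.

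I expect the main obstacle to be the $p$-adic identification $|t(y)|_p = 1/W_p$. This requires producing, at each prime $p$ where $x_1 \equiv x_2 \bmod p$, an affine chart of $\mathbb P^n \times \mathbb P^n$ valid $p$-adically at $(x_1, x_2)$ in which the local equation $t$ of $E$ becomes the displacement in a distinguished coordinate, and matching the resulting $p$-adic valuation to the integer $W_p$ from the definition of $c$. Once these local computations are handled at both archimedean and non-archimedean places, the slope bookkeeping via (a) and (b) is mechanical.
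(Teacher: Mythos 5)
Your argument follows the paper's own proof closely: both apply Lemma \ref{lattice-comparison} separately to $b$ and $f$, interpret $|s(y)|_v$ in terms of the $v$-adic distance from the diagonal via Lemma \ref{ramification-divisor}, and compare slopes by chaining through the common lattice $T_y Y$. Your one-sided bookkeeping in steps (a) and (b), exploiting that both comparisons push the slopes of $T_y Y$ in the same direction, sharpens the paper's triangle-inequality constant from $n$ to $n-1$; this is a legitimate small improvement, though it is immaterial to Lemma \ref{freeness-lower-bound}, where $c < C$ makes the entire right-hand side $O(1)$.
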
\begin{proof}   Let us apply Lemma \ref{lattice-comparison} to $f: Bl_{\Delta} ( \mathbb P^n \times \mathbb P^n) \to X$ .  We take $K_Y \otimes f^* K_X^{-1} = \mathcal O(E)$, $s$ the natural section, and the absolute value to be the standard absolute value on $\mathcal O$ times the pullback of the distance function $d(x_1,x_2)$ on $\mathbb P^n(\mathbb R) \times \mathbb P^n(\mathbb R)$. The distance function is an Arakelov metric on $\mathcal O(E)$ because it is nonvanishing away from $E$ and vanishes to order $1$ at $E$. 

Passing from a lattice to an index $N$ sublattice changes the slopes by at most $\log N$, and changing the metric on a lattice by a distortion factor of $\lambda$ changes the slopes by at most $\log \lambda$. By Lemma \ref{lattice-comparison}, the sum of these two contributions to \[\left| \mu_i ( (Y_y { Bl_{\Delta} ( \mathbb P^n \times \mathbb P^n) } ) - \mu_i ( T_{f(y)} { \Hilb^2 ( \mathbb P^n ) } ) \right| \] is at most $\log ( |s(y)|_{\infty}^{-1}  | \prod_p |s(y)|_p^{-1} ) + O(1)$. Since $s$ is a function that vanishes at the exceptional divisor, $|s(y)|_p$ is the $p$-adic distance from the exceptional divisor, which is exactly the $p$-adic distance between $x_1$ and $x_2$. Similarly, with our chosen norm, $s(y)_{\infty}$ is exactly $d(x_1,x_2)$, so \[ \log ( |s(y)|_{\infty}^{-1}  | \prod_p |s(y)|_p^{-1} ) + O(1) =  \log ( c(x_1,x_2) ) + O(1)  .\]

For  
\[\left| \mu_i ( (T_y { Bl_{\Delta} ( \mathbb P^n \times \mathbb P^n) } ) - \mu_i ( T_ {(b(y)} { \mathbb P^n \times \mathbb P^n} )  \right|\] the situation is identical except that the line bundle is raised to the $n-1$st power, which raises the norms to the same power and thus multiplies the logs by $n-1$.

Summing these terms, we get the stated bound. \end{proof} 

\begin{lemma}\label{height-comparison} For $y \in Bl_\Delta (\mathbb P^n \times \mathbb P^n)$ with $b(y) = (x_1,x_2)$ with $(x_1,x_2)$ distinct, we have \[\left|  \log H( f(y)) - \log H(x_1) + \log  H(x_2) \right |  = O (\log c (x_1,x_2) ) + O(1) .\] \end{lemma}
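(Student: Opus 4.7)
The plan is to reduce the height comparison to a slope comparison and then apply Lemma \ref{slope-comparison} termwise. Recall the identity $\log H(x) = \sum_{i=1}^{\dim X} \mu_i(T_x X)$ from the introduction, valid because $\log H(x)$ equals minus the log of the covolume of $T_x X$ and the slopes are defined so that their sum is minus the log of the covolume.

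First, I would apply this identity on $\Hilb^2(\mathbb P^n)$ to write
\[ \log H(f(y)) = \sum_{i=1}^{2n} \mu_i\bigl(T_{f(y)} \Hilb^2(\mathbb P^n)\bigr). \]
Next, I would observe that the tangent lattice of $\mathbb P^n \times \mathbb P^n$ at $(x_1,x_2)$ decomposes as an orthogonal direct sum $T_{x_1}\mathbb P^n \oplus T_{x_2}\mathbb P^n$, so that, while the individual slopes are interleaved and reordered, the sum of slopes is additive on direct sums (again because it equals minus the log of the covolume, which is multiplicative). This yields
\[ \sum_{i=1}^{2n} \mu_i\bigl(T_{b(y)}(\mathbb P^n \times \mathbb P^n)\bigr) = \sum_{i=1}^n \mu_i(T_{x_1}\mathbb P^n) + \sum_{j=1}^n \mu_j(T_{x_2}\mathbb P^n) = \log H(x_1) + \log H(x_2). \]

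With both quantities expressed as sums of $2n$ slopes on a common footing, the desired estimate becomes a direct consequence of Lemma \ref{slope-comparison}: summing the pointwise bound $|\mu_i(T_{b(y)}(\mathbb P^n \times \mathbb P^n)) - \mu_i(T_{f(y)}\Hilb^2(\mathbb P^n))| \leq n \log c(x_1,x_2) + O(1)$ over the $2n$ values of $i$ gives total error $2n^2 \log c(x_1,x_2) + O(1) = O(\log c(x_1,x_2)) + O(1)$, which is what the lemma claims (modulo the apparent sign typo in the statement).

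I do not expect any substantive obstacle here; the proof is essentially a bookkeeping exercise. The only mildly delicate point is the passage from the direct sum decomposition of $T_{(x_1,x_2)}(\mathbb P^n \times \mathbb P^n)$ to the additivity of the total slope, which must be justified by noting that the identity holds for sums but not for individual slopes.
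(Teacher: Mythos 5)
Your proof is correct and follows the same route as the paper's own (very terse) proof: express both heights as sums of slopes, use additivity of the total slope for the orthogonal direct sum $T_{x_1}\mathbb P^n \oplus T_{x_2}\mathbb P^n$ to obtain $\log H(x_1) + \log H(x_2)$, and sum the bound of Lemma \ref{slope-comparison} over the $2n$ indices. You are also right that the displayed formula has a sign typo and should read $\left|\log H(f(y)) - \log H(x_1) - \log H(x_2)\right|$.
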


\begin{proof} This follows from Lemma \ref{slope-comparison} upon observing that the log of the height is the sum of the slopes and that the height of a point on $\mathbb P^n \times \mathbb P^n$ is the sum of the heights on $\mathbb P^n$.  \end{proof}

\begin{lemma}\label{freeness-lower-bound} All points in $S_{C, \delta} $ have freeness at least $\frac{2 \delta n}{n+1} - o_h(1) $. \end{lemma}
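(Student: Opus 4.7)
The plan is to reduce the freeness computation on $\Hilb^2(\mathbb P^n)$ to slope estimates on the factors of $\mathbb P^n \times \mathbb P^n$. Since $c(x_1,x_2) < C$ is bounded on $S_{C,\delta}$, Lemma \ref{slope-comparison} gives $\mu_{2n}(T_{f(y)}\Hilb^2(\mathbb P^n)) = \mu_{2n}(T_{b(y)}(\mathbb P^n\times\mathbb P^n)) + O(1)$, and Lemma \ref{height-comparison} gives $\log H(f(y)) = \log H(x_1) + \log H(x_2) + O(1)$. So it suffices to produce a lower bound on $\mu_{2n}$ at the point $(x_1,x_2)$ of the product.

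The main technical ingredient I need is the pointwise slope bound $\mu_n(T_x\mathbb P^n) \geq \log H(x)/(n+1) + O(1)$ for every $x \in \mathbb P^n(\mathbb Q)$. I would prove this via the Euler sequence: writing $x = [x_0:\cdots:x_n]$ with coprime integer coordinates and setting $v = (x_0,\ldots,x_n) \in \mathbb Z^{n+1}$, the lattice $T_x\mathbb P^n$ is the quotient $\mathbb Z^{n+1}/\mathbb Z v$ with the Fubini-Study-induced quotient metric (equivalent to the chosen Riemannian metric up to $O(1)$). Each standard basis vector $e_i \in \mathbb Z^{n+1}$ descends to a lattice element whose norm in $T_x\mathbb P^n$ is at most $1/|v|$. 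Since $v \neq 0$, some $n$ of the $e_i$ are linearly independent, which bounds the largest successive minimum by $\gamma_n(T_x\mathbb P^n) \leq 1/|v|$, and hence $\mu_n(T_x\mathbb P^n) \geq \log|v| + O(1) = \log H(x)/(n+1) + O(1)$, using that the anticanonical height satisfies $\log H(x) = (n+1)\log|v| + O(1)$.

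Next, the Arakelov direct-sum decomposition $T_{(x_1,x_2)}(\mathbb P^n\times\mathbb P^n) = T_{x_1}\mathbb P^n \oplus T_{x_2}\mathbb P^n$ (up to $O(1)$ metric distortion from the choice of Riemannian metric on the product) makes the sorted slopes of the sum the sorted concatenation of those of the summands, so $\mu_{2n}(T_{(x_1,x_2)}(\mathbb P^n\times\mathbb P^n)) = \min(\mu_n(T_{x_1}\mathbb P^n),\mu_n(T_{x_2}\mathbb P^n)) + O(1)$. Applying the slope bound from the previous paragraph together with the defining inequality $\log H(x_j) > \delta(\log H(x_1) + \log H(x_2))$ of $S_{C,\delta}$ yields
\[\mu_{2n}\bigl(T_{(x_1,x_2)}(\mathbb P^n\times\mathbb P^n)\bigr) \geq \frac{\delta(\log H(x_1)+\log H(x_2))}{n+1} + O(1),\]
and combining with the two comparison estimates above gives $\mu_{2n}(T_{f(y)}\Hilb^2(\mathbb P^n)) \geq \delta \log H(f(y))/(n+1) + O(1)$.

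Plugging this into the freeness formula for the $2n$-dimensional variety $\Hilb^2(\mathbb P^n)$ produces
\[l(f(y)) = \frac{\mu_{2n}\bigl(T_{f(y)}\Hilb^2(\mathbb P^n)\bigr)}{\log H(f(y))/(2n)} \geq \frac{2\delta n}{n+1} + O\!\left(\frac{1}{\log H(f(y))}\right) = \frac{2\delta n}{n+1} - o_h(1),\]
as claimed. I expect the main obstacle to be the Arakelov slope bound on $T_x\mathbb P^n$ derived from the Euler sequence; once that is in hand, everything else is formal bookkeeping with the previously proved comparison lemmas and the direct-sum structure of the product tangent bundle.
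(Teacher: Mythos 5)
Your proposal is correct and reaches the same bound by the same overall strategy as the paper: use Lemmas \ref{slope-comparison} and \ref{height-comparison} (valid since $c(x_1,x_2)<C$ is bounded on $S_{C,\delta}$) to reduce the freeness estimate on $\Hilb^2(\mathbb P^n)$ to one on $\mathbb P^n\times\mathbb P^n$, then exploit the product structure and the $\delta$-balance condition defining $S_{C,\delta}$. The difference is in the middle step. The paper simply cites two results of Peyre: Corollary 7.4 of \cite{peyre-freedom}, which gives $l(x)\geq n/(n+1)$ for $x\in\mathbb P^n(\mathbb Q)$, and Proposition 7.13 of \cite{peyre-freedom}, the product formula for freeness on $\mathbb P^n\times\mathbb P^n$. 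You instead re-derive both from first principles: the slope bound $\mu_n(T_x\mathbb P^n)\geq\log H(x)/(n+1)+O(1)$ via the Euler sequence, identifying the tangent lattice (up to a rank-one twist that does not affect the argument) with $\mathbb Z^{n+1}/\mathbb Z v$ carrying the rescaled quotient metric and bounding the successive minima by the images of the $e_i$; and the fact that the sorted slopes of a metrized direct sum are the sorted concatenation of the slopes of the summands (the Harder--Narasimhan multiset is additive under $\oplus$), which recovers $\mu_{2n}$ of the product as the minimum of the two $\mu_n$'s. Your version makes the lemma self-contained and transparent about where the exponent $n+1$ originates (the Euler sequence and the scale factor $1/|v|$ in the Fubini--Study metric), at the cost of length; the paper's citation-based version is shorter but opaque without Peyre's paper in hand. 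Both yield the identical final estimate $l(f(y))\geq 2\delta n/(n+1)-o_h(1)$.
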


\begin{proof} By \cite[Corollary 7.4]{peyre-freedom}, the freeness of any point in projective spaces $\mathbb P^n$ is at least $n/(n+1)$. By \cite[Proposition 7.13]{peyre-freedom}, the freeness of $(x_1,x_2) \in \mathbb P^n \times \mathbb P^n$ is 

\[ 2n \frac{ \min ( l(x_1) \log H(x_1) /n , l(x_2) \log H(x_2)/n )}{\log H(x_1)+ \log H(x_2) } \geq 2  \frac{n}{n+1} \frac{\min( \log H(x_1), \log H(x_2)}{ \log H(x_1) + \log H(x_2) } \geq \frac{ 2 \delta n}{n+1}. \]

Then by Lemma \ref{slope-comparison}, when we take the inverse image along $b$ and the image along $f$, the slope and the height will both change by $O(1)$. Thus the freeness, which is the ratio of these two, will change by $o_{h}(1)$.

\end{proof}

\begin{lemma}\label{cardinality} For $B$ sufficiently large, the cardinality of the set of points of $S_{C,\delta}$ with height less than $B$ is at least a constant times $B \log  B$. \end{lemma}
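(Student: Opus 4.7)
The plan is to reduce the problem to counting ordered pairs $(x_1, x_2) \in \mathbb{P}^n(\mathbb{Q})^2$ of distinct rational points satisfying the conditions in the definition of $S_{C,\delta}$, and then to divide by two since swapping the pair gives the same point of $\Hilb^2(\mathbb{P}^n)$. By Lemma \ref{height-comparison}, since $c(x_1,x_2) < C$ is bounded, $\log H(f(b^{-1}(x_1,x_2))) = \log H(x_1) + \log H(x_2) + O(1)$, so $H(f(y)) \leq B$ is equivalent up to a bounded multiplicative constant to $H(x_1) H(x_2) \leq B$. It therefore suffices to exhibit at least a constant times $B \log B$ ordered pairs of distinct $x_1, x_2 \in \mathbb{P}^n(\mathbb{Q})$ with $H(x_1) H(x_2) \leq B$, heights comparable in the sense of the definition, and $c(x_1,x_2) < C$.

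I would restrict to pairs with $W(x_1,x_2) = 1$, so that $c(x_1,x_2) = 1/d(x_1,x_2)$ and the inequality $c < C$ becomes $d(x_1,x_2) > 1/C$. Choose open subsets $U_1, U_2 \subset \mathbb{P}^n(\mathbb{R})$ with $d(u_1, u_2) > 1/C$ for all $u_1 \in U_1$, $u_2 \in U_2$, which exist because $1/C < \max d$. Schanuel's theorem gives $\#\{x \in \mathbb{P}^n(\mathbb{Q}) : x \in U,\ H(x) \leq T\} \sim c_U T$ for the anticanonical height. Summing over $x_1 \in U_1$ against the inner count over $x_2 \in U_2$ with $H(x_2) \leq B/H(x_1)$ in the permitted comparable range, and applying Abel summation, produces an asymptotic of order $B \log B$; the extra logarithm arises from the divergence $\sum_{H(x_1) \leq B} H(x_1)^{-1} \sim c \log B$.

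To impose $W(x_1,x_2) = 1$, I would sieve. For a large parameter $P$, the count above with the further restriction ``$x_1 \not\equiv x_2 \pmod p$ for all primes $p \leq P$'' remains $\sim c_P B \log B$, with $c_P > 0$ incorporating the local factor $\prod_{p \leq P}(1 - 1/|\mathbb{P}^n(\mathbb{F}_p)|)$. From this count I subtract pairs with $x_1 \equiv x_2 \pmod{p}$ for some prime $p > P$. Granting a uniform upper bound of the form $\#\{(x_1,x_2) : H(x_1) H(x_2) \leq B,\ x_1 \equiv x_2 \pmod p\} \ll B \log B / p^n$, the discarded contribution is at most $\ll B \log B \sum_{p > P} p^{-n}$, which tends to zero as $P \to \infty$ because $n \geq 2$. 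Choosing $P$ large enough that this tail is a small fraction of $c_P$ yields the desired $\gg B \log B$ lower bound.

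The main obstacle is establishing this uniform bound on pairs congruent modulo a single prime $p$, uniformly across the full range $P < p \leq B^{1/(n+1)}$ (larger primes force $x_1 = x_2$ under the height bound, since the cross-determinants of primitive coordinate vectors control $W$). The expected density $1/|\mathbb{P}^n(\mathbb{F}_p)| \asymp p^{-n}$ reflects that $x_1 \equiv x_2 \pmod p$ cuts out a codimension-$n$ subvariety of $\mathbb{P}^n \times \mathbb{P}^n$; making this rigorous and uniform in $p$ is a standard geometric-sieve estimate for rational points on $\mathbb{P}^n \times \mathbb{P}^n$, after which the remaining counting is routine.
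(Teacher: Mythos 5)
Your proposal follows essentially the same route as the paper: reduce to counting pairs on $\mathbb{P}^n$ via Lemma \ref{height-comparison}, restrict to pairs that are distinct modulo every prime and whose Archimedean distance is $> C^{-1}$, and then run a sieve where the codimension-$n \geq 2$ nature of the congruence condition makes the tail over large primes converge. The one structural difference is bookkeeping: the paper decomposes the count into dyadic boxes $B_1 < H(x_1) < 2B_1$, $B_2 < H(x_2) < 2B_2$ with $B_1 B_2 \leq B$ and sieves inside each fixed box (getting $\gg B_1 B_2$ pairs per box and $\gg B \log B$ after summing over the $\asymp \log B$ admissible boxes), whereas you sum over $x_1$ first via Abel summation and sieve globally. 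The dyadic version is cleaner precisely because the uniform-in-$p$ upper bound you flag as ``the main obstacle'' is then exactly the Ekedahl geometric sieve applied to a box of essentially fixed side lengths, which is the paper's cited tool; your global version would require an Ekedahl-type estimate uniform over the full sweep of height ratios, which is true but slightly more delicate to state. So your plan is correct; the missing ingredient you point to has a name — the Ekedahl sieve — and the dyadic decomposition is the cheapest way to apply it.
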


\begin{proof} Because each pair of distinct points $(x_1,x_2) \in \mathbb P^n (\mathbb Q)$ has a single inverse image $y \in Bl_\Delta (\mathbb P^n \times \mathbb P^n)$, and the map $f$ to $\Hilb^2 (\mathbb P^n)$ is two-to-one, it suffices to prove this by counting pairs of points on $\mathbb P^n$. By Lemma \ref{height-comparison}, it suffices to replace the height condition by $\log H(x_1) +\log H(x_2) < \log H(x)$.

We can count points with heights in dyadic intervals. It suffices to show that the number of pairs of points $(x_1,x_2)$ with $B_1 < H(x_1) < 2B_1$, $B_2< H(x_2)< 2B_2$, $\log H_1, \log H_2> \delta (\log H_1 + \log H_2)$, $c(x_1,x_2)< C$ is at least a constant times $B_1B_2$. It suffices to restrict attention to pairs of points $x_1,x_2$ which are distinct mod $p$ for all $p$ and whose distance at $\infty$ is at most $C^{-1}$.

We can count using a standard sieve. The number of pairs of rational points on $\mathbb P^n$ that satisfy this condition is asymptotic to a constant times $H_1 H_2$. We must show a positive proportion satisfy the local conditions at each point. 

For each finite set of primes $S$ including $\infty$, the proportion of points that satisfy the conditions is equal to the proportion of adelic points that satisfy the condition, which is $\prod_{ p \in S} (1 - \frac{p-1} {p^{n+1}-1} )$ times the volume at $\infty$ of a nonempty open set, which is at least $\prod_p  (1 - \frac{p-1} {p^{n+1}-1} )$ times the volume. Because $n\geq 2$, this Euler product converges, giving an upper bound for the density. To show that the probability that a random point satisfies all these conditions is positive, it suffices to show that as $S$ goes to $\infty$ slowly with $H_1,H_2$, the density of $(x_1,x_2)$ which are congruent mod $p$ for some $p$ not in $S$ goes to zero. This is immediate from the Ekedahl sieve, because the set of pairs that are congruent mod $p$ has codimension $n \geq 2$.

\end{proof}

\begin{proof}[Proof of Theorem \ref{main}] Because $\epsilon < n/(n+1)$, we can find some $\delta$ with $(n+1) \epsilon/2n < \delta < 1/2$. We then fix any $C> \frac{1}{ \max _{ x_1, x_2 \in \mathbb P^n(\mathbb R) } d(x_1,x_2)}$ and define $S_{\epsilon}$ to be $S_{\delta, C}$ with all points of freeness $\leq \epsilon$ removed. By Lemma \ref{freeness-lower-bound} there are finitely many. Hence by Lemma \ref{cardinality}, condition (3) is satisfied. Conditions (1) follows from the definition of $S_{C,\delta}$ and condition (2) is automatic as we removed all points of lesser freeness. \end{proof}

\begin{lemma}\label{fano} If $n>2$, then $X$ is Fano. \end{lemma}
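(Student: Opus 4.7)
The plan is to express $-K_X$ as a strictly positive linear combination of two semi-ample classes that together generate the nef cone of $X$; since $\operatorname{Pic}(X)$ has rank $2$, this places $-K_X$ in the interior of the nef cone and so shows $-K_X$ is ample.

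For the two classes, I would take $H$ to be the pullback of $\mathcal{O}(1)$ along $X \to \operatorname{Sym}^2(\mathbb{P}^n) \hookrightarrow \mathbb{P}^N$, where the second map is the standard embedding by symmetric bilinear forms on $\mathbb{C}^{n+1}$, and $M = \pi^* \mathcal{O}_G(1)$, where $\pi\colon X \to G(2,n+1)$ sends a length-$2$ subscheme to the unique line containing it (the $\mathbb{P}^2$-bundle structure mentioned in the remark after Theorem \ref{main}). Both are nef and semi-ample as pullbacks of very ample classes along morphisms. Neither is ample: the morphism inducing $H$ contracts the $\mathbb{P}^{n-1}$-fibers of its exceptional divisor (the non-reduced locus in $X$) over the singular diagonal $\Delta \subset \operatorname{Sym}^2(\mathbb{P}^n)$, and the morphism inducing $M$ contracts the $\mathbb{P}^2$-fibers of $\pi$. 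Since $\operatorname{Pic}(X)$ has rank $2$ and $H, M$ are linearly independent boundary classes, they span the two extremal rays of the nef cone.

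To compute $-K_X$ in terms of $H$ and $M$, I would pull back to $B = \operatorname{Bl}_\Delta(\mathbb{P}^n \times \mathbb{P}^n)$. Writing $H_1, H_2$ for the hyperplane classes of the two factors and $E$ for the exceptional divisor of $b$, the blowup formula $K_B = b^*K_{\mathbb{P}^n \times \mathbb{P}^n} + (n-1)E$ together with $K_B = f^* K_X + E$ (Lemma \ref{ramification-divisor} plus adjunction for the double cover) yields
\[ f^*(-K_X) = (n+1)(H_1 + H_2) - (n-2) E. \]
A geometric identification gives $f^* H = H_1 + H_2$ (direct from the symmetric bilinear description) and $f^* M = H_1 + H_2 - E$, the latter because the Plücker map $(x_1, x_2) \mapsto x_1 \wedge x_2$ is cut out by bilinear forms vanishing to order exactly $1$ along $\Delta$, hence descends to a regular morphism on $B$ after twisting by $\mathcal{O}(-E)$. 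Solving the resulting linear system gives $f^*(-K_X) = 3 f^* H + (n-2) f^* M$. Because $f$ is finite and surjective, $f^*$ is injective on $\operatorname{Pic}$, so $-K_X = 3H + (n-2)M$ on $X$.

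For $n > 2$ both coefficients $3$ and $n-2$ are strictly positive, so $-K_X$ lies in the interior of the nef cone (which equals the cone spanned by $H$ and $M$ by the previous paragraph); hence $-K_X$ is ample and $X$ is Fano. The main potential obstacle is justifying that the nef cone really is generated by $H$ and $M$; if needed, one can finish instead via direct Kleiman verification, computing that $-K_X$ has intersection $3$ with a line in a $\pi$-fiber and intersection $n-2$ with a line in a $\mathbb{P}^{n-1}$-fiber over $\Delta$, both strictly positive, and noting that these two classes generate the Mori cone $\overline{NE}(X)$.
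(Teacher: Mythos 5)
Your proposal is correct, and while the numerical heart of the computation is the same as the paper's, you finish the argument by a genuinely different route. Both you and the paper compute, via the blow-up $B = \operatorname{Bl}_\Delta(\mathbb P^n\times\mathbb P^n)$ and the two formulas $f^*K_X = K_B\otimes\mathcal O(-E)$, $K_B = b^*K_{\mathbb P^n\times\mathbb P^n}\otimes\mathcal O((n-1)E)$, that
\[ f^*(-K_X) \;=\; b^*\mathcal O(3,3)\;\otimes\;\pi^*\mathcal O_G(n-2), \]
using the same identification $\pi^*\mathcal O_G(1) = b^*\mathcal O(1,1)\otimes\mathcal O(-E)$ via the Pl\"ucker map. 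The difference is what you do with this. The paper observes that $B\to\mathbb P^n\times\mathbb P^n\times G(2,n+1)$ is injective (hence finite), so $f^*(-K_X)$ is the pullback of the ample bundle $\mathcal O(3,3)\boxtimes\mathcal O_G(n-2)$ along a finite morphism and is therefore ample, and then descends ampleness along the finite surjection $f$. You instead descend the decomposition to $-K_X = 3H + (n-2)M$ on $X$ (using injectivity of $f^*$ on $\operatorname{Pic}$, which holds here since $\operatorname{Pic}X$ is torsion-free) and invoke the structure of the nef cone: $H$ and $M$ are two independent nef-but-not-ample classes on a variety of Picard rank $2$, hence span the two boundary rays, and a strictly positive combination of them is interior, i.e.\ ample. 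Both arguments are valid. The paper's is more self-contained --- it needs only the explicit morphism to $\mathbb P^n\times\mathbb P^n\times G$ and requires no knowledge of the nef or Mori cone --- whereas yours is more structural and transparently explains where the condition $n>2$ enters (as the positivity of the coefficient of $M$). The one thing you rightly flag as requiring justification --- that $H$ and $M$ generate the nef cone --- is exactly the extra hypothesis your route needs and the paper's avoids; your fallback via Kleiman and the two explicit extremal curve classes is a reasonable way to supply it.
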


\begin{proof} It suffices to show that $f^* K_X^{-1}$ is ample, as $f$ is finite and surjective. By Lemma \ref{ramification-divisor}, \[ f^* K_X = K_{Bl_\Delta (\mathbb P^n \times \mathbb P^n) } \otimes \mathcal O(-E) = b^* K_{\mathbb P^n \times \mathbb P^n} \otimes \mathcal O ( (n-2) E ) = b^* \mathcal O( -n-1,-n-1) \otimes \mathcal O( n-2) E).\] Next note that $Bl_\Delta (\mathbb P^n \times \mathbb P^n) $ maps to the Grassmanian $G(2, n+1)$ of lines in $\mathbb P^n$, where we send two points to the line through them, which is well-defined on the exceptional divisor since we blowup. (In fact, this map factors through $X$). The pullback of the line bundle $\mathcal O(1)$ on the Grassmanian is $b^* \mathcal O(1,1) \otimes \mathcal O(-E)$, because we can represent sections (i.e. Pl\"{u}cker coordinates) as bilinear forms nonvanishing on the diagaonal.

Hence $f^* K_X$ is $b^* \mathcal O(3,3)$ times the pullback of $\mathcal O(n-2)$ from the Grassmanian. Because the map to $\mathbb P^n \times \mathbb P^n \times G(2,n+1)$ is injective, and this is the pullback of an ample line bundle along that map, it is ample.

\end{proof}

Thanks to Pieter Belmans for pointing out an error in an earlier version of the proof of Lemma \ref{fano}.

 \end{document}